\let\theoremstyle\nothing
\newtheorem{theorem}{Theorem}[section]
\newtheorem{corollary}[theorem]{Corollary}
\theoremstyle{definition}
\newtheorem{proposition}[theorem]{Proposition}
\theoremstyle{remark}
\newtheorem{remark}[theorem]{Remark}
\newcommand{\nn}{\mathbb{N}}
\newcommand{\rr}{\mathbb{R}}
\newcommand{\bfb}{\mathbf{b}}
\newcommand{\bfc}{\mathbf{c}}
\newcommand{\inD}[1][\relax]{\def\argone{#1}\def\temprelax{\relax}
  \ifx\argone\temprelax\right.\else\,\middle|#1\right.{}\fi}
\begin{document}

\begin{frontmatter}

\begin{keyword}
Eulerian-Catalan number, Dyck permutation, Dyck path, Ballot sequence. 
% keywords here, in the form: keyword \sep keywor
% PACS codes here, in the form: \PACS code \sep code
\end{keyword}

\title{Eulerian-Catalan Numbers}

\author{Hoda Bidkhori}
\address{Department of Mathematics \\
North Carolina State University, Raleigh, NC, 27695 \\
{\tt hbidkho@ncsu.edu} }

\author{Seth Sullivant}
\address{Department of Mathematics \\
North Carolina State University, Raleigh, NC, 27695 \\
{\tt smsulli2@ncsu.edu}}

\begin{abstract}
We show that the Eulerian-Catalan numbers enumerate Dyck permutations.   We provide two proofs for this fact, the first using the geometry of alcoved polytopes and the second a direct combinatorial proof via an Eulerian-Catalan analogue of the Chung-Feller theorem.
\end{abstract}

\end{frontmatter}
%%%%%%%%%%%%%%%%%%%%%%%%%%%%%%%%%%
%%%%%%%%%%%%%%%%%%%%%%%%%%%%%%%%%%
%%%%%%%%%%%%%%%%%%%%%%%%%%%%%%%%%%
%%%%%%%%%%%%%%%%%%%%%%%%%%%%%%%%%%

\section{Introduction}

Let $A_{m,n}$ denote the \emph{Eulerian numbers}, which count the number of permutations on $n$ letters with $m$ descents.  The \emph{Eulerian-Catalan numbers} are defined by
$$
EC_{n}  =  \frac{1}{n+1} A_{n,2n+1}.
$$
We choose to attach the name Catalan to these numbers since $A_{n,2n+1}$ is the central Eulerian number, and for their connection to the Catalan numbers, which will become apparent shortly.  The Eulerian-Catalan numbers appear in the Online Encyclopedia of Integer Sequences \cite{Bagula}, however, no combinatorial interpretation appears there and we could not find one in the literature.

The Eulerian-Catalan number is clearly always an integer since  the Eulerian numbers satisfy the following relations
$$A_{m,n} = (n-m)A_{m-1,n-1} + (m+1)A_{m,n-1} \mbox{ and } A_{m,n} = A_{n-m -1, n} \mbox{ for all } m,n $$
which imply that 
$EC_{n} = A_{n-1,2n} + A_{n, 2n} = 2A_{n,2n}$.  
Given a permutation $w$ of $[n]$, we associate a $0/1$ sequence of length $n-1$, $ad(w)$, where 
$ad(w)_{i} = 0 $ if $w_{i} < w_{i+1}$ and $ad(w)_{i} = 1$ if $w_{i} > w_{i+1}$.  We call $ad(w)$ the ascent/descent vector of $w$.  A $0/1$ sequence is called a \emph{ballot sequence} if every initial string has at least as many zeroes as ones.

Alternately, the permutation $w$ defines a lattice path $L(w)$ starting from $(0,0)$ and with step $(1,0)$ if $i$ is an ascent, and with step $(0,1)$ for a descent.  Writing the entries of $w$ along the vertices of the path produces a standard young tableau of a border strip.  We call a permutation $w \in S_{2n+1}$ a \emph{Dyck permutation} if and only if $L(w)$ is a lattice path from $(0,0)$ to $(n, n)$, with all points on the path satisfying $y \leq x$.  By the usual correspondence between ballot sequences and Dyck paths (see, \cite[Ex. 6.19]{E1}), a permutation $w$ is a Dyck permutation if and only if $ad(w)$ is a ballot sequence.

Let $L$ be a lattice path from $(0,0)$ to $(n,n)$ using steps of $(0,1)$ and $(1,0)$.  The exceedance of $L$, denoted ${\rm exc}(L)$ is defined to be the number of $i \in \{0, \ldots, n\}$ such that there is a point $(i,i')$ in $L$ with $i < i'$.  Hence, the Dyck paths are the lattice paths with exceedance $0$.  The main results of this paper is the following:
\medskip

\begin{theorem}\label{main}
Fix $j = 0, \ldots, n$.  The number of permutations  $w \in S_{2n+1}$ with $n$ descents such that ${\rm exc}(L(w)) = j$ does not depend on $j$.
As a consequence the number of Dyck permutations $w \in S_{2n+1}$ is the Eulerian-Catalan number $EC_{n}.$
\end{theorem}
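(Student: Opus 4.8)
\medskip

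The plan is to obtain the count of Dyck permutations as a formal consequence of the uniformity statement and to prove the latter by passing to cyclic descent sets. For the reduction: every $w\in S_{2n+1}$ with $n$ descents has $\mathrm{exc}(L(w))\in\{0,1,\dots,n\}$, and the total number of such $w$ is $A_{n,2n+1}$; so once the $n+1$ fibres of $w\mapsto \mathrm{exc}(L(w))$ are seen to be equinumerous, each has size $\frac{1}{n+1}A_{n,2n+1}=EC_n$, and the fibre over $0$ is precisely the set of Dyck permutations. Now write $\mathrm{Des}(w)\subseteq[2n]$ for the (linear) descent set --- the quantity $\mathrm{exc}(L(w))$ depends only on it, so I write $\mathrm{exc}(S)$ --- and $\widetilde{\mathrm{Des}}(w)\subseteq\zz/(2n+1)\zz$ for the cyclic descent set, which adjoins the position $2n+1$ exactly when $w_{2n+1}>w_1$. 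Let $c(\widetilde S)$ denote the number of $w\in S_{2n+1}$ with $\widetilde{\mathrm{Des}}(w)=\widetilde S$. Since $\widetilde{\mathrm{Des}}(w)$ is either $\mathrm{Des}(w)$ or $\mathrm{Des}(w)\cup\{2n+1\}$ we get $\#\{w:\mathrm{Des}(w)=S\}=c(S)+c(S\cup\{2n+1\})$; moreover $c$ is invariant under the cyclic shift of $\zz/(2n+1)\zz$ (rotate the one-line word) and under the complementation $\widetilde S\mapsto(\zz/(2n+1)\zz)\setminus\widetilde S$ (replace $w_i$ by $2n+2-w_i$), the latter giving $c(S\cup\{2n+1\})=c([2n]\setminus S)$, while reflecting $L(w)$ across the line $y=x$ gives $\mathrm{exc}([2n]\setminus S)=n-\mathrm{exc}(S)$.

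Combining these, the number to be studied is
\[
d_j:=\#\{w\in S_{2n+1}:\ w\text{ has }n\text{ descents},\ \mathrm{exc}(L(w))=j\}=\!\!\sum_{\substack{S\subseteq[2n],\,|S|=n\\ \mathrm{exc}(S)=j}}\!\! c(S)\ +\!\!\sum_{\substack{S\subseteq[2n],\,|S|=n\\ \mathrm{exc}(S)=n-j}}\!\! c(S),
\]
so it is enough to prove that $X_j:=\sum_{|S|=n,\,\mathrm{exc}(S)=j}c(S)$ does not depend on $j$. Here I group the $n$-element subsets of $\zz/(2n+1)\zz$ into orbits under the cyclic shift: a coprimality argument shows each orbit has the full size $2n+1$, and a direct count shows each orbit has exactly $n+1$ members that avoid the wrap-position $2n+1$, i.e.\ that lie in $[2n]$. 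The crucial input --- an Eulerian--Catalan analogue of the Chung--Feller theorem --- is that those $n+1$ members have exceedances $0,1,\dots,n$, each occurring exactly once. Granting this, every orbit contributes its common value of $c$ to $X_j$ exactly once, so $X_j=\sum_{\text{orbits}}c(\text{orbit})$, which is manifestly independent of $j$; hence $d_j$ is constant and the theorem follows as above.

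The hard part is the crucial input, which I would state combinatorially and prove with a cycle lemma. Encode a lattice path from $(0,0)$ to $(n,n)$ by the list $(a_0,a_1,\dots,a_n)$ of non-negative integers summing to $n$, where $a_i$ is the number of up-steps taken in column $i$; then $\mathrm{exc}=\#\{i:a_0+a_1+\cdots+a_i>i\}$, and the $n+1$ cyclic shifts of the path correspond to the $n+1$ cyclic shifts of $(a_0,\dots,a_n)$. Pass to the bi-infinite $(n+1)$-periodic walk whose increments are the numbers $a_i-1$ (so the sum over one period is $-1$): one checks that $\mathrm{exc}$ of a given cyclic shift equals the rank, counted from the top, of the leading height of that shift within the length-$(n+2)$ window spanning the corresponding period, and since $\mathrm{exc}\le n$ always, the task reduces to verifying that, as this window slides through a full period, that rank assumes each of $0,1,\dots,n$ exactly once. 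Making this rigorous --- fixing a tie-breaking rule for equal heights, and checking the extreme ranks $j=0$ (the leading height is a strict maximum over the next $n+1$ steps) and $j=n$ (this is the unique dominating shift furnished by the Dvoretzky--Motzkin cycle lemma) --- is the only genuinely delicate point; everything else is bookkeeping. An alternative is the geometric route advertised in the abstract: realize $A_{n,2n+1}$ as the normalized volume, equivalently the number of simplices in a unimodular triangulation, of the balanced hypersimplex $\Delta_{n+1,2n+2}=\{x\in[0,1]^{2n+2}:\sum x_i=n+1\}$, identify its alcoves with the permutations of $[2n+1]$ having $n$ descents, and use the cyclic coordinate symmetry of the hypersimplex to split the alcoves into orbits of size $n+1$ on which the orbit-position records the exceedance; the obstacle there is to pin down how the cyclic symmetry translates through the alcove-to-permutation dictionary.
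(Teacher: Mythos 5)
Your argument is correct (with one caveat below) and runs on the same engine as the paper's proof --- cyclic shifting plus the Chung--Feller theorem plus the complementation $w_i\mapsto 2n+2-w_i$ --- but you organize it at the level of cyclic descent sets rather than cyclic shifts of individual permutations, and this is a genuine difference in bookkeeping. The paper fixes a permutation $W$ with $n$ descents, shows that exactly $n+1$ of its $2n+1$ cyclic shifts again have $n$ descents, and splits into two cases according to whether $W$ has $n$ or $n+1$ cyclic descents, handling the second case by applying the complementation to reduce to the first; Chung--Feller (cited, not reproved) then says these $n+1$ shifts realize each exceedance $0,\dots,n$ once. Your version pushes the rotation- and complementation-invariance onto the counting function $c(\widetilde S)$ of cyclic descent sets, which merges the paper's two cases into the single identity $d_j=X_j+X_{n-j}$ and reduces everything to the purely path-theoretic claim that, in each rotation orbit of an $n$-subset of $\zz/(2n+1)\zz$, the $n+1$ members avoiding the wrap position have exceedances $0,\dots,n$ each once; what this buys is a cleaner, case-free reduction (and the slightly sharper statement that $X_j$ itself is constant), at the price of the small auxiliary facts $c(S\cup\{2n+1\})=c([2n]\setminus S)$ and $\mathrm{exc}([2n]\setminus S)=n-\mathrm{exc}(S)$, both of which are correct. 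The caveat: your ``crucial input'' is not something new to be proved --- translating the $n+1$ admissible rotations into paths, it is exactly the cyclic-shift form of the Chung--Feller theorem (cyclic rotations of the vector recording up-steps per column, equivalently, after reflection, the paper's vector $c(P)$ of horizontal steps per height, yield exceedances $0,\dots,n$ each once). The paper simply cites Chung--Feller \cite{Chung}, and you should too; as written, your cycle-lemma sketch (tie-breaking, the ranks $j=0$ and $j=n$) is the one incomplete step of the proposal, so either finish it or replace it by the citation, after which the argument is complete.
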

\vspace{4 mm}
We provide a direct combinatorial proof of Theorem \ref{main} in Section \ref{sec:direct}.  We also provide a geometric proof of the fact that the number of Dyck permutations $w \in S_{2n+1}$ is the Eulerian-Catalan number $EC_{n}$.  We define a polytope $P_{2,n}$ and we show that its normalized volume is the Eulerian-Catalan number.
 This is proved in Section \ref{pkn}.
The polytope $P_{2,n}$ turns out to be an alcoved polytope \cite{AP}, and hence its volume can also be interpreted as counting permutations with certain restrictions on its descent positions, which is explained in Section \ref{alcove}.  Combining these two arguments yields  the result that
the number of Dyck permutations $w \in S_{2n+1}$ is the Eulerian-Catalan number $EC_{n}$.  We prove these results in a Fuss-Catalan generality which gives a combinatorial interpretation for the numbers $\frac{1}{n+1} A_{n, kn + k -1}$ as the number of $(k-1)$-Dyck permutations.

%%%%%%%%%%%%%%%%%%%%%%%%%%%%%%%%%%%%%%%%%%%%%%%%
%%%%%%%%%%%%%%%%%%%%%%%%%%%%%%%%%%%%%%%%%%%%%%%%
%%%%%%%%%%%%%%%%%%%%%%%%%%%%%%%%%%%%%%%%%%%%%%%%

\section{Subdividing the Hypersimplex}  \label{pkn}

The hypersimplex $\Delta(k,n)$ is the polytope
$$
\Delta (k,n) =  \left\{ (x_1, \ldots, x_n) \in [0,1]^n : \sum_{i = 1}^{n} x_{i} = k \right\}.
$$
It is well-known that the normalized volume of the hypersimplex  is the Eulerian number $A_{k-1,n-1}$.  Stanley~\cite{Sta1} provides a combinatorial proof of this fact by 
triangulating the hypersimplex.

Fix $k, n \in \nn$, and consider the hypersimplex $\Delta(n+1, k(n+1))$.
We define the polytope $P_{k, n}$  with the following inequalities: 
$$
P_{k,n} =  \left\{ (x_{1}, \ldots, x_{k(n+1)}) \in \Delta(n+1, k(n+1)) :  \sum_{s =1}^{kt} x_{ s} \leq t,  \quad t = 1, \ldots, n \right\}.
$$ 

\medskip

\begin{remark}
A $0/1$ sequence is called a \emph{$k$-ballot sequence} if every initial string has at least $k$-times as many $0$'s as $1$'s.  Note that a $1$-ballot sequence is an ordinary ballot sequence.   The polytope $P_{k,n}$ is equal to the convex hull of the $(k-1)$-ballot sequences of length $k(n+2)$.  This is shown in work of the first author \cite{HB}, where these polytopes are studied in the larger context of lattice path matroid polytopes.  Lattice path matroids were introduced in \cite{B1} and the Catalan matroid \cite{Fed} is a special case.  The polytope $P_{2,n}$ is the Catalan matroid polytope.  Corollary \ref{cor:volume} below implies that the normalized volume of the Catalan matroid polytope is the Eulerian-Catalan number.  We do not need these details here, and refer the reader to \cite{HB}.
\end{remark}

\medskip

For each $i \in \{0, \ldots, n\}$ define the polytope $P_{k,n,i} \subseteq \Delta(n+1, k(n+1))$ by the inequalities
$$
P_{k,n,i} =  \left\{ (x_{1}, \ldots, x_{k(n+1)}) \in \Delta(n+1, k(n+1)) :  \sum_{s =1}^{kt} x_{ki + s} \leq t,  \quad t = 1, \ldots, n \right\}
$$ 
where the indices are considered modulo $k(n+1)$.  For example, with $k = 2$ and $n = 2$, we get three polytopes:

$$
P_{2,2,0} =  \{ (x_{1}, \ldots, x_{6}) \in \Delta(3, 6)  :  x_{1} + x_{2} \leq 1, \quad  x_{1} + x_{2}+ x_{3} + x_{4} \leq 2 \}
$$
$$
P_{2,2,1} =  \{ (x_{1}, \ldots, x_{6}) \in \Delta(3, 6)  :  x_{3} + x_{4} \leq 1, \quad  x_{3} + x_{4}+ x_{5} + x_{6} \leq 2 \}
$$   
$$
P_{2,2,2} =  \{ (x_{1}, \ldots, x_{6}) \in \Delta(3, 6)  :  x_{5} + x_{6} \leq 1, \quad  x_{5} + x_{6}+ x_{1} + x_{2} \leq 2 \}.
$$
Note that $P_{k,n,0} = P_{k,n}$. 

\vspace{5 mm}

\begin{theorem}
Fix $k,n \in \nn$.  The interiors of the polytopes $P_{k,n,i}$ and $P_{k,n,j}$ are disjoint if $i \neq j$, and $\Delta(n+ 1, k(n+1)) = \cup_{i = 0}^{n} P_{k,n,i}$. 
\end{theorem}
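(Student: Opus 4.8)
The claim has two parts: (i) the polytopes $P_{k,n,i}$ cover the hypersimplex $\Delta(n+1,k(n+1))$, and (ii) their interiors are pairwise disjoint. The natural strategy is to exhibit, for each point $x$ of the hypersimplex, a canonical index $i = i(x)$ such that $x \in P_{k,n,i}$, and to show that this index is forced (hence unique) whenever $x$ lies in some interior. For a point $x = (x_1,\dots,x_{k(n+1)})$ in the hypersimplex, consider the ``partial sum'' function along the cyclic word: set $S_0 = 0$ and $S_j = x_1 + \cdots + x_j$, and more generally track the quantity $f(j) = S_j - j/k$, a cyclically-defined discrete function (here it is cleanest to work with the values at the multiples of $k$, i.e.\ $g(t) = S_{kt} - t$, extended cyclically with period $n+1$ after accounting for the global constraint $S_{k(n+1)} = n+1$). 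The membership condition $x \in P_{k,n,i}$ says exactly that the cyclic shift of $g$ by $i$ steps stays $\le 0$ on $t = 1,\dots,n$; equivalently, that the shift starts at a point where $g$ attains its minimum.

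\textbf{Step 1: the covering.} Because $g$ is a real-valued function on the cyclic group $\zz/(n+1)$ with $g(0) = g(n+1) = 0$ (using $\sum x_s = n+1$), it attains a minimum at some $t_0 \in \{0,1,\dots,n\}$. Choosing $i = t_0$ and using that for each $t$ the value $\sum_{s=1}^{kt} x_{ki+s}$ equals $S_{k(i+t)} - S_{ki} = (g(i+t) + (i+t)) - (g(i) + i)$ reduced appropriately modulo the period, one checks that $g(i+t) - g(i) \le 0$ for all $t$, which is precisely the defining inequality of $P_{k,n,i}$. This gives $x \in P_{k,n,i}$, proving $\Delta(n+1,k(n+1)) \subseteq \bigcup_{i=0}^n P_{k,n,i}$; the reverse inclusion is immediate since each $P_{k,n,i}$ is carved out of the hypersimplex by definition.

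\textbf{Step 2: disjoint interiors.} A point $x$ in the interior of $P_{k,n,i}$ satisfies all the defining inequalities strictly: $g(i+t) - g(i) < 0$ for $t = 1,\dots,n$. This says $g$ has a \emph{strict} global minimum at $t = i$ (among the $n+1$ cyclic positions), so $i$ is uniquely determined by $x$. Hence $x$ cannot simultaneously lie in the interior of $P_{k,n,j}$ for $j \ne i$, which gives the disjointness of interiors. (One should also note the degenerate cases where the hypersimplex or the $P_{k,n,i}$ have dimension issues, but ``interior'' here means relative interior and the argument is unaffected.)

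\textbf{Main obstacle.} The bookkeeping in Step 1 is the delicate point: the indices $ki+s$ are taken modulo $k(n+1)$, so the sum $\sum_{s=1}^{kt} x_{ki+s}$ wraps around, and one must carefully verify that $\sum_{s=1}^{kt}x_{ki+s} = S_{k(i+t)} - S_{ki}$ when $i+t \le n+1$ and equals $(n+1) - S_{ki} + S_{k(i+t) - k(n+1)}$ when it exceeds $n+1$ — and then check that in \emph{both} cases this equals $t + (g(i+t \bmod (n+1)) - g(i))$ once one uses $g(n+1) = 0$. Getting the wrap-around term to collapse exactly is where the hypothesis $\sum x_s = n+1$ is essential, and it is the only step requiring genuine care; everything else is the standard ``cyclic shift to the minimum'' argument familiar from the Chung–Feller / cycle lemma circle of ideas.
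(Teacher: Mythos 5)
Your proposal is correct in substance, but note one orientation slip: with your definition $g(t)=S_{kt}-t$, membership $x\in P_{k,n,i}$ translates (after the wrap-around computation you describe, which indeed uses $\sum_s x_s=n+1$) into $g(i+t)-g(i)\le 0$ for $t=1,\dots,n$, i.e.\ $g$ attains its \emph{maximum} over the cyclic domain at $i$, not its minimum; so in Step 1 you must shift to a maximizer of $g$ (equivalently, run your argument with $-g$). With that fix both steps go through: a maximizer exists because the domain $\{0,\dots,n\}$ is finite and $g(0)=0$ is consistent with the periodic extension, which gives the covering; and a point of the relative interior satisfies the non-implicit-equality inequalities strictly, so the maximizer is strict, hence unique, which gives disjointness of interiors (for each competing $j$ you only need the single strict inequality $g(j)<g(i)$).

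Your route for the covering is genuinely different from the paper's. The paper first projects by $y_i=-1+\sum_{s=ki+1}^{k(i+1)}x_s$ onto the hyperplane $H(n)=\{y: y_0+\cdots+y_n=0\}$, identifies the images of the $P_{k,n,i}$ with cones $C(n,i)$, and then uses a purely geometric fact: the $C(n,i)$ are the simplicial cones generated by the $n$-element subsets of the vectors $v_0,\dots,v_n$ with $v_i=-e_{i-1}+e_i$, which span $H(n)$ and satisfy $v_0+\cdots+v_n=0$, and any such configuration yields a conical decomposition of the whole hyperplane. You instead apply the cycle-lemma/Chung--Feller principle directly to the partial sums, producing an explicit index $i(x)$ for every point. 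The disjointness arguments are essentially the same observation in different language: the paper takes $x\in\mathrm{int}\,P_{k,n,0}$, uses the strict inequality $x_1+\cdots+x_{ki}<i$ together with the total sum $n+1$ to violate the $t=n+1-i$ inequality of $P_{k,n,i}$, which is exactly your statement that a strict maximum of $g$ is unique. What each approach buys: the paper's projection and cone decomposition set up machinery ($\phi$, $Q(k,n)$, $H(n)$) that is reused verbatim in its later geometric proof of Theorem \ref{main} for $j=1$, while your argument is more elementary and self-contained, and fits naturally with the combinatorial Chung--Feller proof in Section \ref{sec:direct}.
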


\begin{proof}
Since the $P_{k,n,i}$ are all affinely isomorphic via a transformation that permutes coordinates, it suffices to show that an $x \in {\rm int} \, P_{k,n,0}$ does not belong to any other $P_{k,n,i}$.  Since $x \in {\rm int} \, P_{k,n,0}$ it satisfies the inequality
$$
x_{1} +  \cdots  + x_{ki}  < i.
$$
Since we are in the hypersimplex, we always have
$$
x_{1} + \cdots +  x_{k(n+1)}  = n+1.
$$
Combining these, we deduce that
$$
x_{ki +1} + \cdots  + x_{k(n+1)} > n+1 -i 
$$
which implies $x \notin P_{k,n,i}$.

\medskip

To prove that $\Delta(n+ 1, k(n+1)) = \cup_{i = 1}^{n+1} P_{k,n,i}$, we must show that any point $x \in \Delta(n+ 1, k(n+1))$ belongs to one of the $P_{k,n,i}$.  To do this, we consider the linear transformation
$$
\phi:  \rr^{k(n+1)} \rightarrow \rr^{n+1}
$$
such that $y_{i}  = -1 + \sum_{s = ki+1}^{k(i+1)} x_{s}$ where the coordinates on $\rr^{n+1}$ are $y_{0}, \ldots, y_{n}$.
The image of the hypersimplex $\Delta(k(n+1), n+1)$ is the polytope
$$Q(k,n) = \{ y \in [-1,k-1]^{n+1} : y_{0} + \cdots + y_{n} = 0 \}$$
and the image of $P_{k,n,i}$ is the polytope  
$$
R_{k,n,i} =  \{y \in Q(k,n) : y_{i} + \cdots  + y_{i + t} \leq 0, t = 0, n-1 \}. 
$$
Note that a point $x \in \Delta(n+1, k(n+1))$ belongs to $P_{k,n,i}$ if and only if its image in $Q(k,n)$ belongs to $R_{k,n,i}$.  Furthermore, the argument in the preceding paragraph implies that the $R(n,k,i)$ have disjoint interiors.  Hence, it suffices to show that $Q(k,n) = \cup_{i =0}^{n} R_{k,n,i}$.  Since the inequalities $-1 \leq y_{i} \leq k-1$ are common to all the polytopes, it suffices to show that the plane
$$
H(n) =  \{ y \in \rr^{n+1}: y_{0} + \cdots + y_{n} = 0 \}
$$
can be decomposed is the union of the cones
$$C(n,i) =  \{y \in H(n) : y_{i} + \cdots + y_{i + t} \leq 0, t = 0, n-1 \}.$$
For $i = 0,\ldots, n$, let  $v_{i} = -e_{i-1} + e_{i}$.  The cones $C(n,i)$ are simplicial, and it is straightforward to see that the generators of $C(n,i)$ are  $\{v_{0}, \ldots, v_{n} \} \setminus \{v_{i}\}$.  The vectors $v_{0}, \ldots, v_{n}$ also span $H(n)$, and $v_{0}+ \cdots + v_{n} = 0$.  This implies that the union of the cones spanned by the sub-collections of $n$ vectors is all of $H(n)$, which is what we needed to show.
\end{proof}

\begin{corollary}\label{cor:volume}
The normalized volume of $P_{k,n,i}$ is $\frac{1}{n+1} A_{n,kn + k -1}$.
\end{corollary}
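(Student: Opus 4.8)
The plan is to read off the volume from the subdivision established in the preceding theorem, together with Stanley's volume formula for the hypersimplex. First, recall \cite{Sta1} that the normalized volume of the hypersimplex $\Delta(a,b)$ is the Eulerian number $A_{a-1,b-1}$. Applying this with $a = n+1$ and $b = k(n+1)$ shows that the normalized volume of $\Delta(n+1, k(n+1))$ equals $A_{n,\, k(n+1)-1} = A_{n,\, kn+k-1}$.

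By the previous theorem, $\Delta(n+1, k(n+1)) = \bigcup_{i=0}^{n} P_{k,n,i}$ and the polytopes $P_{k,n,i}$ have pairwise disjoint interiors. Since normalized volume is additive over a decomposition into full-dimensional lattice polytopes with disjoint interiors, the sum of the normalized volumes of the $n+1$ polytopes $P_{k,n,i}$ is $A_{n,\, kn+k-1}$. Next, as noted in the proof of the theorem, each $P_{k,n,i}$ is carried to $P_{k,n,0}$ by the cyclic coordinate shift $x_{s} \mapsto x_{s+k}$ (indices modulo $k(n+1)$). This shift is a permutation of the coordinates, hence an automorphism of the ambient affine lattice $\zz^{k(n+1)} \cap \{\, x_1 + \cdots + x_{k(n+1)} = n+1 \,\}$, and therefore preserves normalized volume. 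Consequently all $n+1$ polytopes $P_{k,n,i}$ have the same normalized volume, and that common value must be $\tfrac{1}{n+1} A_{n,\, kn+k-1}$, which is the assertion.

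The substance of the corollary is entirely carried by the subdivision theorem and the known hypersimplex volume; the only thing that needs a modicum of care is the lattice bookkeeping — namely that ``normalized volume'' here is taken with respect to the affine lattice in the hyperplane $\{\, \sum_s x_s = n+1 \,\}$, that each $P_{k,n,i}$ is a lattice polytope for this lattice, and that coordinate permutations act as unimodular maps. All of these are standard facts for the hypersimplex and its sub-polytopes, so I anticipate no genuine obstacle beyond making this explicit.
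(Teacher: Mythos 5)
Your proposal is correct and follows essentially the same route as the paper: additivity of volume over the subdivision $\Delta(n+1,k(n+1)) = \cup_{i=0}^{n} P_{k,n,i}$, invariance under the cyclic coordinate shift, and Stanley's formula ${\rm vol}\,\Delta(n+1,k(n+1)) = A_{n,kn+k-1}$. The extra remarks on lattice normalization are fine but not needed beyond what the paper states.
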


\begin{proof}
Since $\Delta(n+1, k(n+1)) = \cup_{i=0}^{n} P_{k,n,i}$ and their relative interiors are disjoint, we have that 
$${\rm vol}(\Delta(n+1, k(n+1))) =  \sum_{i =0}^{n} {\rm vol} \, P_{k,n,i}.$$
As the cyclic shift of coordinates which sends $P_{k,n,0}$ to $P_{k,n,i}$ is volume preserving, and ${\rm vol}\, (\Delta(n+1, k(n+1))) = A_{n,kn + k -1}$, we deduce the desired formula.
\end{proof}

%%%%%%%%%%%%%%%%%%%%%%%%%%%%%%%%%%%%%%%%%%%%%%%%
%%%%%%%%%%%%%%%%%%%%%%%%%%%%%%%%%%%%%%%%%%%%%%%%
%%%%%%%%%%%%%%%%%%%%%%%%%%%%%%%%%%%%%%%%%%%%%%%%

\section{Alcoved Polytopes}\label{alcove}

Fix  integers $k,n \in \nn$.  For $0 \leq i < j \leq n$, let $b_{ij}, c_{ij}$ be integers, with $b_{ij} \leq c_{ij}$.  The alcoved polytope defined by this data is:
$$
P(k, n,\bfb, \bfc)  :=   \left\{ (x_{1}, \ldots, x_{n}) \in \rr^{n} : \sum_{i =1}^{n} x_{i} = k,  b_{ij} \leq x_{i+1} + \cdots + x_{j} \leq c_{ij}, 0 \leq i < j \leq n \right\}.
$$
Alcoved polytopes were studied in detail by Lam and Postnikov \cite{AP}.  Alcoved polytopes have natural triangulations into unimodular simplices which are themselves alcoved polytopes and which are indexed by permutations; these simplices are called alcoves.  In the special case,  where $P(k,n,\bfb,\bfc) \subseteq \Delta(k,n)$, Lam and Postnikov give an explicit description of the simplices involved in the alcove triangulation, and hence a combinatorial formula for the volume of these alcoved polytopes.

Let $W(k,n,\bfb,\bfc) \subset S_{n-1}$ be the set of permutations $w =
w_1w_2\cdots w_{n-1} \in S_{n-1}$ satisfying the following
conditions:
\begin{enumerate}
\item
$w$ has $k-1$ descents.
\item
The sequence $w_{i} \cdots w_{j}$ has at least $b_{ij}$ descents.
Furthermore, if $w_{i} \cdots w_{j}$ has exactly $b_{ij}$ descents,
then $w_{i} < w_{j}$.
\item
The sequence $w_{i} \cdots w_{j}$ has at most $c_{ij}$ descents.
Furthermore, if $w_{i} \cdots w_{j}$ has exactly $c_{ij}$ descents,
then we must have that $w_{i} > w_{j}$.
\end{enumerate}
In the above conditions we assume that $w_0 = 0$.

\vspace{4 mm}
\begin{theorem}\label{descent} \cite{AP}  The normalized volume of $P(k,n,\bfb, \bfc) \subseteq \Delta(k,n)$ is equal to $|W(k,n,\bfb,\bfc)|$.
\end{theorem}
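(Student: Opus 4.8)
The plan is to exhibit $P(k,n,\bfb,\bfc)$ as a union of maximal cells of the standard unimodular triangulation of $\Delta(k,n)$ and then to read off which cells occur. Pass to the partial-sum coordinates $p_i := x_1 + \cdots + x_i$ for $i = 0, \ldots, n$, so that $p_0 = 0$ and $p_n = k$ on $\Delta(k,n)$, and the inequalities defining $P(k,n,\bfb,\bfc)$ become $b_{ij} \le p_j - p_i \le c_{ij}$. These coordinates linearize the affine arrangement $\calh = \{\, p_j - p_i = c : 0 \le i < j \le n,\ c \in \zz \,\}$, whose maximal cells meeting the interior of $\Delta(k,n)$ are unimodular simplices — the alcoves — each of normalized volume $1$; by Stanley's triangulation of the hypersimplex \cite{Sta1} there are $A_{k-1,n-1}$ of them. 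Since every facet hyperplane of $P(k,n,\bfb,\bfc)$ lies in $\calh$, the polytope $P(k,n,\bfb,\bfc)$ is a union of closed alcoves with pairwise disjoint interiors, so its normalized volume equals the number of alcoves it contains; it remains to match these alcoves with $W(k,n,\bfb,\bfc)$.

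First I would recall the dictionary between alcoves and permutations (this is the ingredient taken from \cite{AP}). On a fixed alcove $A$ the integer parts $\lfloor p_i \rfloor$ are constant (they jump only across the hyperplanes $p_i - p_0 = c$ of $\calh$), and the linear order of the fractional parts $\{p_1\}, \ldots, \{p_{n-1}\}$ is constant (it changes only across the hyperplanes $p_i - p_j = c$); together with the convention $w_0 = 0$, forced by $\{p_0\} = 0$, this order is recorded by a permutation $w = w(A) \in S_{n-1}$, and $A \mapsto w(A)$ realizes Stanley's triangulation. The basic observation is that, for generic $x$, an integer is crossed at step $\ell$ (i.e. $\lfloor p_\ell \rfloor > \lfloor p_{\ell-1} \rfloor$) if and only if $\{p_{\ell-1}\} > \{p_\ell\}$, i.e. if and only if $w$ has a descent at position $\ell-1$; telescoping, for $0 \le i < j \le n$ the quantity $\lfloor p_j \rfloor - \lfloor p_i \rfloor$ equals the number of descents of the word $w_i w_{i+1} \cdots w_j$, which I denote $d_{ij}$. (The step $p_0 \to p_1$ never crosses an integer while the step $p_{n-1} \to p_n$ always does, which recovers condition (1), that $w$ has $k-1$ descents; the index $j = n$ is read through the complement $p_n - p_i = k - p_i$.)

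With this, the heart of the argument is the exact range of the affine function $p_j - p_i$ on an alcove $A_w$. On the interior of $A_w$ one has $p_j - p_i = d_{ij} + (\{p_j\} - \{p_i\})$ with $\{p_j\} - \{p_i\}$ lying in $(0,1)$ when $w_i < w_j$ and in $(-1,0)$ when $w_i > w_j$; since a continuous affine function carries the compact connected set $A_w$ onto the closure of its values on the dense interior, the range of $p_j - p_i$ over $A_w$ is $[d_{ij}, d_{ij}+1]$ if $w_i < w_j$ and $[d_{ij}-1, d_{ij}]$ if $w_i > w_j$. Consequently $A_w \subseteq \{p_j - p_i \le c_{ij}\}$ exactly when $d_{ij} < c_{ij}$, or $d_{ij} = c_{ij}$ and $w_i > w_j$ — which is condition (3) — and dually $A_w \subseteq \{p_j - p_i \ge b_{ij}\}$ exactly when $d_{ij} > b_{ij}$, or $d_{ij} = b_{ij}$ and $w_i < w_j$ — which is condition (2). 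Therefore $A_w \subseteq P(k,n,\bfb,\bfc)$ if and only if $w \in W(k,n,\bfb,\bfc)$, and counting such alcoves gives the normalized volume, as claimed.

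The step I expect to be the main obstacle is the middle paragraph: setting up the alcove-to-permutation bijection with all conventions in place (the role of $w_0 = 0$, and the degenerate index $j = n$) and checking that it agrees with Stanley's triangulation; granting that, the last paragraph is essentially bookkeeping about a bounded affine function on a simplex, the only delicate point being that the closed alcove $A_w$ meets the integer hyperplane $p_j - p_i = d_{ij}$ on precisely the side prescribed by whether $w_i < w_j$ or $w_i > w_j$ — which is exactly what produces the two "furthermore" clauses.
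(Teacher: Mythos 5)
This statement is quoted from Lam--Postnikov \cite{AP} and the paper gives no proof of it, so there is nothing internal to compare against; your sketch is, in substance, the argument of \cite{AP} itself. It is correct: passing to the partial sums $p_i$, observing that $P(k,n,\bfb,\bfc)$ is a union of unimodular alcoves, encoding an alcove by the order of the fractional parts $\{p_1\},\ldots,\{p_{n-1}\}$ (with $w_0=0$), identifying $\lfloor p_j\rfloor-\lfloor p_i\rfloor$ with the number of descents of $w_i\cdots w_j$, and checking that $p_j-p_i$ ranges over $[d_{ij},d_{ij}+1]$ or $[d_{ij}-1,d_{ij}]$ according to whether $w_i<w_j$ or $w_i>w_j$ exactly reproduces the two ``furthermore'' clauses; your treatment of the degenerate index $j=n$ by complementation matches the convention left implicit in the statement.
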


\vspace{5mm}
We apply Theorem \ref{descent} to give a combinatorial formula for the volume of the polytope $P_{k,n}$. 
As discussed previously, a lattice path from $(0,0)$ to $(nk,n)$ using $(0,1)$ and $(1,0)$ steps is called a $k$-Dyck path if every point on the path satisfies $y \leq \frac{1}{k} x$.
When $L(w)$ is a $k$-Dyck path, $w$ is called a $k$-Dyck permutation.  A $0/1$ sequence is called a $k$-ballot sequence if each initial string has at least $k$ times as many $0$'s as $1$'s.  Note that $w$ is a $k$-Dyck permutation precisely where $ad(w)$ is a $k$-ballot sequence.

\vspace{5mm}
\begin{proposition} \label{prop:dyck}
The normalized volume of $P_{k,n}$ is equal to the number of permutations $w \in S_{kn+k -1}$ such that $L(w)$ is a $(k-1)$-Dyck path.  Equivalently, the volume equals the number of permutations $w \in S_{kn+k -1}$ such that ${\rm ad}(w)$ is a $(k-1)$-ballot sequence.
\end{proposition}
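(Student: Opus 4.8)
The plan is to realize $P_{k,n}$ as an alcoved polytope sitting inside the hypersimplex $\Delta(n+1,k(n+1))$, use Theorem \ref{descent} to rewrite its normalized volume as a count of permutations with prescribed descent behaviour, and then recognize that behaviour as the $(k-1)$-ballot condition.

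First I would put $P_{k,n}$ into the form $P(K,N,\bfb,\bfc)$ of Section \ref{alcove} with $K=n+1$ and $N=k(n+1)$: take $c_{0,kt}=t$ for $t=1,\dots,n$, take $c_{ij}=j-i$ for every other pair $0\le i<j\le N$, and take $b_{ij}=0$ for all pairs. Since $0\le x_s\le 1$ on $\Delta(K,N)$, every inequality $b_{ij}\le x_{i+1}+\cdots+x_j\le c_{ij}$ other than those for the pairs $(0,kt)$ holds automatically, so $P(K,N,\bfb,\bfc)$ is exactly $P_{k,n}$; in particular $P_{k,n}\subseteq\Delta(K,N)=\Delta(n+1,k(n+1))$, and Theorem \ref{descent} applies to give that the normalized volume of $P_{k,n}$ equals $|W(K,N,\bfb,\bfc)|$, a set of permutations $w=w_1\cdots w_{N-1}\in S_{kn+k-1}$ (convention $w_0=0$).

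Next I would unwind conditions (1)--(3) for this $\bfb,\bfc$. Condition (1) says $w$ has $K-1=n$ descents. Condition (2) is vacuous: with every $b_{ij}=0$ its only content is that a strictly increasing block $w_i\cdots w_j$ has $w_i<w_j$, which always holds (also for $i=0$, since $w_0=0$). In condition (3) a pair with $c_{ij}=j-i$ contributes nothing, since a block of $j-i+1$ entries has at most $j-i$ descents and, when it has exactly that many, it is strictly decreasing so $w_i>w_j$. The only real constraints come from the pairs $(0,kt)$: the word $w_0w_1\cdots w_{kt}$ must have at most $t$ descents, and if it has exactly $t$ then $w_0>w_{kt}$, which is impossible as $w_0=0$; hence $w_0w_1\cdots w_{kt}$, equivalently $w_1\cdots w_{kt}$ (because $w_0<w_1$), has at most $t-1$ descents, for each $t=1,\dots,n$. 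So $W(K,N,\bfb,\bfc)$ consists of the $w\in S_{kn+k-1}$ with exactly $n$ descents for which $w_1\cdots w_{kt}$ has at most $t-1$ descents, $t=1,\dots,n$.

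Finally I would match this with the $(k-1)$-ballot condition on $\mathrm{ad}(w)$. The number of descents of $w_1\cdots w_{kt}$ is the number of $1$'s among the first $kt-1$ entries of $\mathrm{ad}(w)$, so the previous paragraph says that the length-$(kt-1)$ prefix of $\mathrm{ad}(w)$ has at most $t-1$ ones for $t=1,\dots,n$, while condition (1) says $\mathrm{ad}(w)$ has exactly $n$ ones altogether. A short lemma---a $0/1$ word is a $(k-1)$-ballot sequence if and only if, for every $m\ge1$, its prefix of length $km-1$ has at most $m-1$ ones (equivalently, its $m$-th $1$ lies in position $\ge km$)---then shows that these conditions together are equivalent to $\mathrm{ad}(w)$ being a $(k-1)$-ballot sequence (necessarily with exactly $n$ ones, as $\mathrm{ad}(w)$ has length $kn+k-2$), i.e.\ to $L(w)$ being a $(k-1)$-Dyck path. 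With Theorem \ref{descent} this proves the proposition. The identification with an alcoved polytope and the ballot lemma are routine; the point needing care is the middle step---correctly treating the borderline clauses ``if exactly $b_{ij}$ (resp.\ $c_{ij}$) descents'' and the phantom entry $w_0=0$, which is precisely what upgrades the bound $\le t$ coming from $c_{0,kt}=t$ to the sharp bound $\le t-1$ that matches the ballot condition.
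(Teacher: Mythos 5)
Your proposal is correct and follows essentially the same route as the paper: realize $P_{k,n}$ as an alcoved polytope inside $\Delta(n+1,k(n+1))$, apply Theorem \ref{descent} to get permutations in $S_{kn+k-1}$ with $n$ descents and at most $t-1$ descents in each prefix $w_1\cdots w_{kt}$, and identify this with the $(k-1)$-ballot condition on ${\rm ad}(w)$. You simply make explicit the choices of $b_{ij},c_{ij}$, the handling of the borderline clauses with $w_0=0$, and the prefix-counting lemma that the paper's proof leaves implicit.
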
  

\begin{proof}
The polytope $P_{k,n} \subset \Delta(n+1, k(n+1))$ is an alcoved polytope since its defining inequalities are $0 \leq x_{i} \leq 1$, $x_1+\cdots + x_{k(n+1)} = n+1$ and
$$\sum_{s =1}^{kt} x_{s} \leq t,  \quad t = 1, \ldots, n.
$$
Applying Theorem \ref{descent}, we see that the volume of $P_{k,n}$  is the number of permutations $w=w_1\cdots w_{kn + k -1}  \in S_{kn + k -1}$ satisfying the following conditions:
\begin{enumerate}
\item  $w$ has $n$ descents.
\item $w_1\cdots w_{ik}$ has at most $i-1$ descents, for $1 \leq i \leq n.$
\end{enumerate}
These conditions are satisfied if and only if $ad(w)$ is a $(k-1)$-ballot sequence.
\end{proof}

Combining Corollary \ref{cor:volume} and Proposition \ref{prop:dyck} we deduce:
\vspace{4 mm}
\begin{theorem}
The number of permutations $w \in S_{kn+k -1}$ such that $L(w)$ is a $(k-1)$-Dyck path  is $\frac{1}{n+1} A_{n,kn +k -1}$.  In particular, the number of Dyck permutations in $S_{2n+1}$ is the Eulerian-Catalan number $EC_{n}$.
\end{theorem}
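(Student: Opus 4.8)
The plan is to simply splice together the two computations already in hand, since both sides of the desired identity have been shown to equal the same normalized volume. First I would invoke Proposition~\ref{prop:dyck}: the normalized volume of $P_{k,n}$ equals the number of permutations $w \in S_{kn+k-1}$ for which $L(w)$ is a $(k-1)$-Dyck path (equivalently, for which $ad(w)$ is a $(k-1)$-ballot sequence). Next I would apply Corollary~\ref{cor:volume} in the case $i = 0$; since $P_{k,n,0} = P_{k,n}$ by the observation immediately following the definition of the family $P_{k,n,i}$, this gives that the normalized volume of $P_{k,n}$ equals $\frac{1}{n+1} A_{n,kn+k-1}$. Equating the two expressions for this one quantity yields the first assertion.

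For the ``in particular'' clause I would specialize to $k = 2$. Then $kn + k - 1 = 2n+1$, and a $(k-1)$-Dyck path is a $1$-Dyck path, i.e.\ an ordinary Dyck path from $(0,0)$ to $(n,n)$ staying weakly below the diagonal; hence the permutations being counted are exactly the Dyck permutations in $S_{2n+1}$ as defined in the introduction. Their number is $\frac{1}{n+1} A_{n,2n+1}$, which is precisely the definition of $EC_{n}$. That closes out the statement.

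The only thing needing any care — and it is not a genuine obstacle — is checking that ``normalized volume'' means literally the same thing in Corollary~\ref{cor:volume} and in Proposition~\ref{prop:dyck}, and that the polytope $P_{k,n}$ of the proposition is identical to the $i = 0$ member of the family used in the corollary; both are immediate from the definitions. It is worth noting that, because the argument rests entirely on volume equalities, it furnishes a second, geometric derivation of the Dyck-permutation count (the $k=2$ case), independent of the combinatorial proof of Theorem~\ref{main}, and consistent with it.
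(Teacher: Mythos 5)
Your proposal is correct and is exactly the paper's argument: the theorem is stated immediately after the sentence ``Combining Corollary~\ref{cor:volume} and Proposition~\ref{prop:dyck} we deduce,'' so equating the two expressions for the normalized volume of $P_{k,n}$ (using $P_{k,n,0}=P_{k,n}$) and then specializing to $k=2$ is precisely what the authors do.
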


%%%%%%%%%%%%%%%%%%%%%%%%%%%%%%%%%%%%%%%%%%%%%%%%
%%%%%%%%%%%%%%%%%%%%%%%%%%%%%%%%%%%%%%%%%%%%%%%%
%%%%%%%%%%%%%%%%%%%%%%%%%%%%%%%%%%%%%%%%%%%%%%%%

\section{Exceedances of Lattice Paths and Eulerian-Catalan Numbers}\label{sec:direct}

In this section we prove Theorem \ref{main} which gives us a combinatorial interpretation of Eulerian-Catalan number in terms of certain permutation statistics.  This result  is related to a classic proof that the Catalan numbers enumerate Dyck paths \cite{Chung}.  First we provide a combinatorial proof of Theorem \ref{main}. Later, we give a geometric proof of  Theorem \ref{main} for  the case $j=1$ (and hence also for $j = n-1$).  In Sections \ref{pkn} and \ref{alcove}, we saw a geometric proof for the cases
$j=0$ (and hence also for $j = n$). It is an interesting problem to provide
geometric proofs for other cases.

%Let $L$ be a lattice path from $(0,0)$ to $(n,n)$ using steps of $(0,1)$ and $(1,0)$.  The exceedance of $L$, denoted ${\rm exc}(L)$ is defined to be the number of $i \in \{0, \ldots, n\}$ such that there is a point $(i,i')$ in $L$ with $i > i'$.  Hence, the Dyck paths are the lattice paths with exceedance $0$.

%\begin{theorem}\label{theorem:main}
%Fix $j = 0, \ldots, n$.  The number of permutations in $w \in S_{2n+1}$ with $n $ descents such that ${\rm exc}(L(w)) = j$ is the Eulerian-Catalan number $EC_{n}$.
%\end{theorem}

\begin{proof}[Proof of Theorem \ref{main}]
Consider a lattice path $P$ with $(0,1)$ and $(1,0)$ steps from $(0,0)$ to $(n,n)$.  Define $c(P)=(c_0,\ldots, c_n)$, where 
$c_i$ is the number of horizontal steps of $P$ at height $y=i.$  The cyclic permutations
$C_j=(c_j,\ldots, c_{j-1})$ of $c(P)$ are all distinct, and for each there is a unique path $P_j$ from $(0,0)$ to $(n,n)$ so that
$c(P_j)=C_j$.  The number of exceedances  of the paths $P_0,\ldots, P_n$ are the numbers $0,1,\ldots,n$ in some 
order.  These results are known as the \emph{Chung-Feller Theorem} \cite{Chung}.
 
Consider a permutation $W=w_1\cdots w_{2n+1}$ with $n$ descents.  We have one of the following cases:
 \begin{enumerate}
 \item The cyclic permutation $(w_1\cdots w_{2n+1})$ has $n$ cyclic descents.
 \item The cyclic permutation $(w_1\cdots w_{2n+1})$ has $n+1$ cyclic descents.
 \end{enumerate}
The cyclic descents of a (cyclic) permutation also include the possibility of a descent at the last position $w_{2n+1}$, which occurs when $w_{1} < w_{2n+1}$.

In the case $(w_1\cdots w_{2n+1})$ has $n$ cyclic descents, we consider distinct indexes $1 = i_0 < \cdots < i_{n} \leq 2n+1$ so that $w_{i_{k} -1}w_{i_k}$ is not a cyclic descent in the cyclic permutation $(w_1\cdots w_{2n+1})$.  Now, consider the $n+1$ permutations obtain by cyclic shifting the permutation $W$,
starting at $w_{i_0},\ldots, w_{i_{n}}$. We denote these permutations by $W_0, \ldots, W_n$. The permutations $W_0,\ldots, W_n$ all have $n$ descents, whereas all other cyclic shiftings of $W$ have $n-1$ descents.   As in the Chung-Feller theorem, we  define 
$c(L(W_0))=(c_0,\ldots,c_n)$ where $c_i$ is the number of horizontal steps at height $i$ for a lattice path $L(W_0)$.  The cyclic shiftings of $c(L(W_0))$, $C_j=(c_j,\ldots, c_{j-1})$ are all distinct, and for each there is a unique path $P_j$ from $(0,0)$ to $(n,n)$ so that
$c(P_j)=C_j$. It is easy to see that $P_j= L(W_j)$. By the Chung-Feller theorem,
  the lattice path associated to these permutations have different number of exceedance $0,\ldots, n.$  This shows that the $n+1$ cyclic shiftings of permutation $W$ which have $n$ descents, have $0, \ldots, n$ number of exceedances in some order.
  
 \vspace{4 mm}

Now, consider the case where the cyclic permutation $(w_1\cdots w_{2n+1})$ has $n+1$ cyclic descents.  For any permutation $W=w_1\cdots w_{2n+1}$, we define: 
$\widehat{W}=(\widehat{w}_1\cdots, \widehat{w}_{2n+1})=(2n+2-w_1)\cdots(2n+2-w_{2n+1})$.  If $W$ has $k$ exceedances, $\widehat{W}$ has $n-k$ exceedances, and if $W$ has $k$ cyclic descents, $\widehat{W}$ has $2n+1 - k$ cyclic descents.    We consider $i_0, \ldots , i_{n}$ so that   $w_{i_{k}-1}w_{i_k}$ is a cyclic descent in  $W$ and therefore $\widehat{W}$ does not have a cyclic descent at $\widehat{w}_{i_{k}-1}\widehat{w}_{i_k}$.  We  consider the $n+1$ permutations obtained by cyclic shifting the permutation $W$,
starting with $w_{i_0},\ldots, w_{i_{n}}$.  We denote them by $W_0,\ldots, W_n$. As above, these are the only permutations obtained by cyclic shifting $W$ that have $n$  descents, all other shiftings having $n+1$ descents.  As we see in the first case, we know that  the lattice path associated to $L(\widehat{W}_j)$ have different number of exceedance for $j=0,\ldots,n$. Therefore,  $P_j= L(W_j)$ for $j=0,\ldots,n$ have all  different numbers of exceedances  from $0,\ldots,n$ in some order.

Combining the above two results,  we know that among  the $2n+1$ cyclic shifts of a permutation $w$ with $n$ descents,  exactly $n+1$ of them have $n$ descents, and their associated lattice paths have different numbers of exceedances $0,\ldots,n.$
Therefore, the number of permutations  $ w \in S_{2n+1}$ with $n$ descent  and ${\rm exc}(L(w))=j$ are the same for $j=0,\ldots,n$.  As the total number of permutations $w \in S_{2n+1}$ with $n$ descents is $A_{n,2n+1},$ we see that the number of permutations  $ w \in S_{2n+1}$ with $n$ descent  and ${\rm exc}(L(w))=j$ is the Eulerian-Catalan number.
\end{proof}

Note that the proof of Theorem \ref{main} also gives a direct combinatorial proof for the fact that the number of Dyck permutations in $S_{2n+1}$ is $A_{n-1,2n} + A_{n,2n}$.  Indeed, the Dyck permutations are in bijective correspondence with the set of cyclic permutation with either $n$ or $n+1$ cyclic descents.  Cycling such a permutation until $2n+1$ is at the end and deleting $2n+1$ yields a bijection with permutations in $S_{2n}$ with either $n-1$ or $n$ descents.

To conclude this section, we provide a geometric proof of Theorem \ref{main} in the case of $j =1$, in the spirit of the proofs from Sections \ref{pkn} and \ref{alcove}.

%\begin{prop}
%The number of permutations in $w \in S_{2n+1}$ with $n $ descents such that ${\rm exc}(L(w)) = 1$ is the Eulerian-Catalan number $EC_{n}$.
%\end{prop}

\begin{proof}[Proof of Theorem \ref{main} with $j =1$]
Consider the polytope $P_{2,n}$, which is given by inequalities:
$$
P_{2,n} =  \left\{ (x_{1}, \ldots, x_{2(n+1)}) \in \Delta(n+1, 2(n+1)) :  \sum_{s =1}^{2t} x_{ s} \leq t,  \quad t = 1, \ldots, n \right\}.
$$
We call the inequality $\sum_{s =1}^{2t} x_{ s} \leq t$ the $t$-th inequality.  Let $T \subseteq [n]$ and consider the polytope defined by flipping the $t$-th inequalities for $t \in T$:
$$
P_{2,n}(T) = \left\{ (x_{1}, \ldots, x_{2(n+1)}) \in \Delta(n+1, 2(n+1)) : \sum_{s =1}^{2t} x_{ s} \geq t, t \in T,  \sum_{s =1}^{2t} x_{ s} \leq t, t \in [n] \setminus T \right\}.  
$$ 
Applying Theorem \ref{descent}, we see that the volume of $P_{2,n}(T)$ is the number of permutations $w \in S_{2n+1}$ such that $L(w)$ has an exceedance in position $t-1$ if and only if   $t \in T$ (that is, there is a point $(t-1, s)$ in $L(w)$ with $s > t-1$).  Thus, to prove Theorem \ref{main} it suffices to show that
\begin{equation}\label{eq:excedancesum}
EC_{n}  =  \sum_{T \subseteq [n] : |T| = j}  {\rm vol} ( P_{2,n}(T)).
\end{equation}
We prove Eq.(\ref{eq:excedancesum}) in the case $j = 1$.  

To do this, we consider the linear transformation
$$
\phi:  \rr^{2(n+1)} \rightarrow \rr^{n+1}
$$
such that $y_{i}  = -1 + \sum_{s = 2i+1}^{2(i+1)} x_{s}$ where the coordinates on $\rr^{n+1}$ are $y_{0}, \ldots, y_{n}$.  The image of the hypersimplex $\Delta(n+1,2(n+1))$ is the following polytope
$$Q(2,n) = \{ y \in [-1,1]^{n+1} : y_{0} + \cdots + y_{n} = 0 \}$$
and the image of $P_{2,n}(T)$ is the polytope  
$$
R_{2,n}(T) =  \{y \in Q(2,n) : y_{0} + \cdots  + y_{t-1} \geq 0,  t \in T, y_{0} + \cdots  + y_{t-1} \leq 0, t \in [n] \setminus T  \}. 
$$
Note that a point $x \in \Delta(n+1,2(n+1))$ belongs to $P_{2,n}(T)$ if and only if its image in $Q(2,n)$ belongs to $R_{2,n}(T)$.  Furthermore, all the $P_{2,n}(T)$ and $Q_{2,n}(T)$ are disjoint.  If we can find a collection of volume preserving linear transformations $\tau_{1}, \ldots, \tau_{n}$ such that $\tau_{1}(R_{2,n}(\{1\})), \ldots, \tau_{n}(R_{2,n}(\{n\}))$ have disjoint interiors and such that
$$
R_{2,n}(\emptyset) =  \cup_{i \in [n]} \tau_{i}( R_{2,n}(\{i\})),
$$
we will prove the theorem.   This follows because it is straightforward to lift the linear transformations $\tau_{i}$ to $\rr^{2(n+1)}$ in a way that will yield a similar result for the $P_{2,n}$.  All our linear transformations will be coordinate permutations.  Thus, we can ignore the common inequalities, $-1 \leq y_{i} \leq 1$, and consider the same questions in the plane
$$
H(n) =  \{ y \in \rr^{n+1}: y_{0} + \cdots + y_{n} = 0 \}
$$
for the cones
$$
C_{n}(T)  =   \{y \in H(n) : y_{0} + \cdots y_{t-1} \geq 0, t \in T, y_{0} + \cdots y_{t-1} \leq 0, t \in [n] \setminus T \}.
$$

Each of the cones $C_{n}(T)$ is simplicial, generated by the set of rays
$$
e_{t-1} - e_{t}  :  t \in T,   -e_{t-1} + e_{t} :  t \in [n] \setminus T,
$$
where $e_{i}$ denotes a standard unit vector.

For $t \in [n]$, consider the linear transformation $\tau_{t}$ which sends $y_{i} \mapsto y_{n-t+1 +i}$, where indices are considered modulo $n+1$.
This volume preserving linear transformation sends the generators of $C_{n}(\{t\})$ to the vectors
$$
-e_{0} + e_{n} \mbox{ and }  -e_{t-1} + e_{t} :  t \in [n] \setminus T.
$$
All of these rays belong to $C_{n}(\emptyset)$, hence  $\tau_{t}(C_{n}(\{t\})) \subseteq C_{n}(\emptyset)$.  Furthermore, each of the cones $\tau_{t}(C_{n}(\{t\}))$ is generated by a facet of $C_{n}(\emptyset)$ together with the same interior ray $-e_{0} + e_{n}$.  Hence, the set of cones $\{ \tau_{t}(C_{n}(\{t\})), t = 1, \ldots, n \}$ form the facets of a polyhedral subdivision of $C_{n}(\emptyset)$ which completes the proof.
\end{proof}

%%%%%%%%%%%%%%%%%%%%%%%%%%%%%%%%%%%%%%%%%%%%%%%%%%%
%%%%%%%%%%%%%%%%%%%%%%%%%%%%%%%%%%%%%%%%%%%%%%%%%%%
%%%%%%%%%%%%%%%%%%%%%%%%%%%%%%%%%%%%%%%%%%%%%%%%%%%

\section{Further Directions}

Our results on Eulerian-Catalan numbers suggest a number of interesting problems.

\begin{enumerate}
\item  Both the Catalan numbers and the Eulerian numbers have numerous combinatorial interpretations.  Are there other interpretations of the Eulerian-Catalan numbers as enumerating objects that are counted by the Eulerian numbers where a certain statistic is a Catalan object?  \item Both the Catalan numbers and Eulerian numbers have $q$ (and $q,t$) analogues.  Do these extend to the Eulerian-Catalan numbers?  
\item Catalan numbers and Eulerian numbers have natural generalizations beyond the symmetric group (i.e. to other types).  Can these be extended to Eulerian-Catalan numbers?  
\item   Generalizing the geometric proof of Theorem \ref{main} to arbitrary $j$ suggests the existence of interesting polyhedral decompositions of the cone of positive roots of a Weyl group. 
\end{enumerate}

\section*{Acknowledgement}
\label{sec:acknowledgement}
Hoda Bidkhori was partially supported by the David and Lucille Packard Foundation.  Seth Sullivant was partially supported by the David and Lucille Packard Foundation and the US National Science Foundation (DMS 0954865).

\end{document}